\newtheorem{thm}{Theorem}[section]
\newtheorem{lemma}[thm]{Lemma}
\theoremstyle{definition}
\numberwithin{equation}{section}
\newcommand{\CA}{\mathcal{A}} 
\renewcommand{\a}{\alpha} 
\newcommand{\C}{\mathbb{C}} 
\renewcommand{\H}{\mathcal{H}} 
\DeclareMathOperator{\id}{id} 
\newcommand{\T}{\mathbb{T}} 
\newcommand{\oh}{{\tfrac{1}{2}}} 
\newcommand{\R}{\mathbb{R}} 
\newcommand{\ohh}{{\scriptstyle\frac{1}{4}}} 
\newcommand{\sq}{\unskip\nobreak\kern5pt\nobreak\vrule height4pt width4pt depth0pt} 
\newcommand{\Z}{\mathbb{Z}} 
\def\<#1,#2>{\langle#1,#2\rangle} 
\def\CH{\mathcal{H}}
\def\suq2{{\cal A}(SU_q(2))}
\definecolor{lgray}{rgb}{0.2,0.2,0.2}
\definecolor{shadecolor}{named}{GreenYellow}
\newbox\ncintdbox \newbox\ncinttbox
\title{Curved noncommutative torus and Gauss--Bonnet}
\author{Ludwik D\k{a}browski${}^1$ and Andrzej Sitarz${}^{2,3,*}$ \\ 
\vbox{
\small
\begin{center}
${}^1$ SISSA (Scuola Internazionale Superiore di Studi Avanzati), \\
via Bonomea 265, 34136 Trieste, Italy \\ \ \\
${}^2$ Institute of Physics, Jagiellonian University,\\
Reymonta 4, 30-059 Krak\'ow, Poland \\ \ \\
${}^3$ Institute of Mathematics of the
Polish Academy of Sciences,\\
ul. Sniadeckich 8, Warszawa, 00-950 Poland \\ \ \\
${}^*$ Partially supported by MNII grant 189/6.PRUE/2007/7.\\
\end{center}
}
}
\date{}
\begin{document}
\maketitle
\begin{abstract}
We study perturbations of the flat geometry of the noncommutative 
two-dimensional torus $\T^2_\theta$ (with irrational $\theta$). They 
are described by spectral triples $(A_\theta, \H, D)$, with the 
Dirac operator $D$, which is a differential operator with coefficients 
in the commutant of the (smooth) algebra $A_\theta $ of $\T_\theta$. 
We show, up to the second order in perturbation, that the $\zeta$-function 
at $0$ vanishes and so the Gauss-Bonnet theorem holds. We also 
calculate first two terms of the perturbative expansion of the 
corresponding  local scalar curvature.
\end{abstract}
\thispagestyle{empty}
\section{Introduction}
Starting with the seminal paper \cite{CoTr} (see also \cite{cohcon}) there has been a growing 
interest in study of the metric and curvature issues on the most popular example in 
the realm of noncommutative geometry: the noncommutative two-torus $\T_\theta$
\cite{fatkha}, \cite{bhumar}, \cite{conmos2}, \cite{fatkha1}.
This has been pursued in the framework of spectral triples \cite{Co94, Co95},
mainly with their twisted (or modular) version, with the metric obtained from 
the flat one by a conformal rescaling factor. 

Building on the analysis of 
spectral triples on the principal $U(1)$ bundles \cite{DS}, \cite{DSZ} we propose 
another, more general class of curved perturbations of  the flat geometry.
They are described by spectral triples $(A_\theta, \H, D)$, with the Dirac operator 
$D$, which is a differential operator with coefficients in 
$M_2(\C) \otimes (J A_\theta J^*)$, where  $A_\theta $ is the (smooth) algebra 
of $\T_\theta$ and $J$ is the real structure (charge conjugation).
The fact that $JA_\theta J^*$ is in the commutant of $A_\theta $ is here essential 
to guarantee that $D$ has bounded commutators with functions in  $A_\theta $. 

We take the perturbations of the standard equivariant Dirac operator on the noncommutative 
torus, which correspond to arbitrary perturbations of the standard flat metric. 
We compute that the $\zeta$-function at $0$ vanishes up to the second order 
in the perturbation parameter $\varepsilon$.
Thus we show the Gauss-Bonnet formula to hold (in the same approximation) 
for a more general class of Dirac operators than  previously studied,
which strongly indicates that it should hold exactly.
In section \ref{curv} we compute up to the order $\varepsilon^2$ 
the corresponding local scalar curvature.

We also briefly touch upon few other properties as 
the first order differential calculus, the orientation postulate 
(which holds under commutativity of certain components of the orthonormal coframe) 
and absolute continuity.

\section{The curved geometry of torus}
We start with the recollection of the Riemannian geometry of the classical 2-dimensional
torus.

\subsection{Classical torus $\T^2$}

Let $x^\mu \in [0, 2\pi )$, $\mu= 1,2$ be the coordinates on $\T^2=S^1\times S^1$. 
The derivations  $\delta_\mu =i \frac{\partial}{\partial x^\mu}$ generate the 
usual action of $U(1) \times U(1)$ on $\T^2$ and on 
its algebra of smooth functions $A=C^\infty(\T^2,\C)$.
Given a metric tensor $g= g_{\mu\nu}d x^\mu dx^\nu$ on $\T^2$  we introduce and use
a global orthonormal frame (basis) $e=\{e_j\}$, $e_j=\{e_j^\mu \partial_\mu\}$, 
$j=1,2$, of the tangent bundle $T \T^2$. Clearly $e'=\{e_j'\}$ is another orthonormal 
frame for $g$ if and only if  $e'$ differs from $e$ by a (point dependent) rotation.
Note that the $2\times 2$ matrix of smooth real valued functions 
$e_j^\mu$ is nondegenerate at all points of $\T^2$ and that one has
$$
\sum_j e_j^\mu e_j^\nu =  g^{\mu\nu},
$$
where $g^{\mu\nu}$ is the inverse matrix of  $g_{\mu\nu}$. 
The metric gives rise in a natural way to a measure on the torus 
$\mu_g = \sqrt{|g|} \mu_0$, where $|g|=\det (g_{\mu\nu})$

and by $\mu_0$ we denote the standard Lebesgue measure on $\T^2$ 
(corresponding to the flat metric).

It is straightforward to see that the scalar curvature has the following  
expression in terms of the orthonormal frame:
\begin{equation}\label{Ron}
R = 2\, {\cal L}_{e_i}(c_{ijj}) - c_{kii} c_{kjj} - \frac{1}{4} c_{ijk}c_{ijk}
      - \frac{1}{2} c_{ijk} c_{kji}\, , 
\end{equation}
where $c_{ijk}$ are the structure constants of the commutators (of vector fields)
$$[ e_i, e_j ] =   c_{ijk} e_k .$$

To make contact with our computations in the noncommutative case, we present here the 
perturbative expansion of the scalar curvature $R$.
Assuming the small perturbation of the globally flat orthonormal frame:
$$e_i = e_i^\mu \partial_\mu , \qquad e^\mu_i = \delta^\mu_i + \varepsilon h_{i}^\mu, $$
where $h_{ij}$ is a matrix of arbitrary smooth real functions on the manifold,
we obtain (up to $\varepsilon^2$)
\begin{equation}\label{curveps}
\begin{aligned}
R\;& = 2 \varepsilon \left( h^i_{i,jj} - h^j_{i,ij}\right) 
     + 2 \varepsilon^2 h_{i}^j \left( 2h_{k,ij}^k - h_{i,jk}^k 
      - h_{j,kk}^i \right) \\ 
   & + \frac{1}{2} \varepsilon^2 h_{i,k}^j \left( 
       h_{k,i}^j - h_{i,k}^j +3 h_{j,i}^k - h_{i,j}^k 
     - 5 h_{j,k}^i \right) \\
   & + \varepsilon^2 h_{i,j}^i \left( 
       2 h_{j,k}^k + 2 h_{k,k}^j  - h_{k,j}^k \right) 
    - \varepsilon^2 h_{i,j}^j h_{i,k}^k.
\end{aligned}
\end{equation}
We shall also need the perturbative expansion of $\sqrt{|g|}\, R$,
which since 
$$ \sqrt{|g|} \sim 1 - \varepsilon (h_k^k),$$
 has just the terms of order $\varepsilon^2$
\begin{equation}\label{differs}
 - 2 \varepsilon^2 h_k^k \left( (h_i^i)_{jj} - (h_i^j)_{ij} \right),
\end{equation}
in addition to the terms in the expansion (\ref{curveps}) of the scalar curvature $R$ alone.

On $\T^2$ there are four spin structures but for any of them we can take  the associated 
spinor bundle $\Sigma \T^2$ as the trivial rank $2$ 
hermitian 
vector bundle on $\T^2$. A $U(1)$ action either lifts directly to a spin 
structure and then to an action
$$ \kappa: U(1)\times \Sigma \T^2 \to \Sigma \T^2,$$ and so it lifts to the action 
on sections $C^\infty(\T^2,\Sigma \T^2 )$ of $\Sigma \T^2$, or to a projective action, 
i.e. to the action of a non-trivial double cover of $U(1)$ (which happens to be still $U(1)$ as a group). The generators of the action of $U(1)\times U(1)$, 
that implement (via a commutator) the two derivations $\delta_\mu$ of $A$
will be denoted by the same symbol $\delta_\mu$.  

The standard Dirac operator $D_g$, which comes from the metric compactible and torsion-free
spin connection can be globally expressed on smooth sections of $\Sigma \T^2$ in terms of 
the orthonormal frame as:
\begin{equation}
D_g = \sum\limits^2_{j,\mu =1}  \left( \sigma^j e_j^\mu \delta_\mu  \right) 
 + \frac{1}{2} c_{122  }\ \sigma^1 + \frac{1}{2} c_{211  }\ \sigma^2 \ ,
\end{equation}
where $\sigma^j$ are the hermitian Pauli matrices in  $M_2(\C)$: 
\begin{equation}
\label{clifrel}
\sigma^1\sigma^2 = -\sigma^2\sigma^1= i \sigma^3,\quad\sigma^3 := \hbox{diag}(1,-1).
\end{equation}
$D_g$ extends to an unbounded selfadjoint operator on $L^2(\Sigma \T^2, \mu_g)$. 
To study the spectral properties we shall employ
a convenient isometry of Hilbert spaces:
$$ Q: L^2(\Sigma \T^2, \mu_g) \to L^2(\Sigma \T^2, \mu_0), \quad \quad Q(\psi) = 
 \hbox{det}(g)^\frac{1}{4} \psi, $$
and consider the operator $Q D_g Q^{-1}$ on $L^2(\Sigma \T^2, \mu_0)$. 
Note that the principal symbols of  $Q D_g Q^{-1}$ and $ D_g $ are equal
and only the zero-order terms are different. 
As the zero-order term is not significant for the computations of the curvature we can, 
in fact, restrict ourselves to yet another operator on $\Sigma \T^2$:
\begin{equation}
D = \sum\limits^2_{j,\mu =1}  \left( \sigma^j e_j^\mu \delta_\mu  + \frac{1}{2} 
\sigma^j (\delta_\mu e_j^\mu) \right),
\label{odir}
\end{equation}
which extends to selfadjoint operator on $L^2(\Sigma \T^2, \mu_0)$ and differs from
$\Xi D_g \Xi^{-1}$ only by a zero-order term. 

Such a term in two dimensions consists of a Clifford image of some selfadjoint one-form.
For the purpose of the paper the crucial fact is that 
$(C^\infty(\T^2), L^2(\Sigma T^2, \mu_0), D)$ 
is an even spectral triple, with the grading $\chi := \sigma^3$. 
Though it is not necessarily real, 
it contains a first order, elliptic  Dirac-type operator $D$, 
which is a fluctuaction of the genuine Dirac operator, 
that belongs to a real spectral triple (c.f. \cite{Co95}).

Moreover, the distance defined by $D$,
\begin{equation}
\label{dist}
d_D (x,y) = \sup  \{ | a(x) - a(y) | ~:~ a\in A ,
 ~ \| [D,a]\| \leq 1 \} \ 
\end{equation}
is exactly the geodesic distance of the metric $g$. 
This is so, because only the principal symbol of $D$ has nontrivial commutators with 
$a \in C^\infty(\T^2)$ and the norm is still the supremum norm. 

Clearly, since $D$ is in general different from the canonical Dirac operator, it might 
(a priori) not minimize the Einstein action functional or (in the special case of 
two-dimensional torus) do not satisfy the Gauss-Bonnet theorem, stated in terms of its 
spectral invariant. However, as the perturbations of the Dirac by one-forms do not 
contribute the leading terms of the heat-kernel coefficients we notice that the relevant 
spectral invariants related to curvature shall be unchanged. \\

Consider now two functionals which to any element $a \in C^\infty(\T^2)$ assign the value
of the zeta function at 0
$$ 
\zeta_{a,D}(0) = \hbox{Tr\ } a |D|^{-s}|_{s=0} \quad  \hbox{and} \quad
\zeta_{a,D_g}(0) = \hbox{Tr\ } a |D_g|^{-s}|_{s=0} , $$
where
$\hbox{Tr\ }$ is the trace over the relevant Hilbert space.
Since both these functionals are spectral and depend only on the principal symbol of the operators $D$ and $D_g$ respectively, they are necessarily identical,
$ \zeta_{a,D}(0) = \zeta_{a,D_g}(0)$.
Assuming that both operators have empty kernel, 
the known expression for $D_g$ in terms of scalar curvature,
 $\zeta_{a,D_g}(0) = \frac{1}{12\pi} \int_{\T^2} a R \,\mu_g$
permits us to have a similar  expression for $D$
$$ 
\zeta_{a,D}(0) \frac{1}{12\pi} \int_{\T^2} a \sqrt{|g|} R \,\mu_0\ ,$$
in which however we had to use  the standard Lebesgue measure $\mu_0$
that corresponds to the canonical trace on the algebra $C^\infty(\T^2)$.
This means that from the functional $\zeta_{a,D}(0)$ we retrieve 
the information about the product $\sqrt{|g|}\, R$ rather than about $R$ itself
(in fact it may be not possible to define  $\sqrt{|g|}$ in the noncommutative setup).\\

\subsection{Noncommutative torus $\T^2_\theta $}

The $C^*$-algebra of the $2$-dimensional noncommutative torus $\T_\theta$ is generated 
by two unitary elements $U_i$, $i=1,2$, with the relations 
$$ U_1 U_2 = e^{2\pi i\theta} U_2 U_1,$$ 
where  $0< \theta < 1$ is irrational.
The smooth subalgebra $A_\theta$ consists of all elements of the form
$$ a = \sum_{k,l \in \Z} \alpha_{kl} U_1^k U_2^l , $$
where $\alpha_{k,l}$ is a rapidly decreasing sequence. 

The natural action of $U(1) \times U(1)$ by automorphisms, gives, in its infinitesimal
form, two linearly independent derivations on the algebra $A_\theta$, given on the
generators as:
\begin{equation}
\delta_1 (U_1) = U_1, \delta_1 (U_2) = 0,\;\;
\delta_2 (U_2) = U_2, \delta_2 (U_1) = 0\ .
\label{u1act}
\end{equation}

The canonical trace on $A_\theta$ 
$$ \mathfrak{t}(a) = \a_{00}, $$
is invariant with respect to the action of $U(1) \times U(1)$, and therefore: 
$$ \mathfrak{t}(\delta_j(a)) = 0,\quad \forall j=1,2.$$
It is easy to see that the trace extends uniquely to the $C^*$-algebra
of the noncommutative torus.

Let $\CH_0$ be the Hilbert space of the GNS construction with 
respect to the trace $\mathfrak{t}$ on $\T_\theta$, and $\pi$ the 
associated faithful representation. With the orthonormal basis, 
$\epsilon_{k,l}$, of $\CH_0$ we have:
$$ 
\begin{aligned}
U_1 \epsilon_{k,l} &= \epsilon_{k+1,l}, \\
U_2 \epsilon_{k,l} &= e^{2\pi ik \theta} \epsilon_{k,l+1}, \\
\end{aligned}
$$
where $k,l$ are in $\Z$ or $\Z+\oh$ depending on the choice of the spin structure (see \cite{PaSi}). 
We double the Hilbert space taking $\CH = \CH_0 \otimes \C^2$, 
with the diagonal representation of the algebra. We take $J$, the real structure to be
$$J = i \sigma^2 \circ J_0, $$
where $J_0$ is the canonical Tomita-Takesaki antilinear map on 
the Hilbert space $\CH_0$:
$$ J_0 \epsilon_{k,l} = \epsilon_{-k,-l}. $$
By construction, the conjugation by $J$ maps the algebra $A_\theta$ to its commutant:
$$ [a, Jb^*J^{-1}] = 0, \;\;\; \forall a,b \in A_\theta.$$

The explicit action of the derivations on the basis is given by 
$$ 
\delta_1 \epsilon_{k,l} = k \epsilon_{k,l}, \quad
\delta_2 \epsilon_{k,l} = l \epsilon_{k,l}.
$$

Before we continue let us observe two important facts. The derivations 
$\delta_i$, $i=1,2$ anticommute with $J$ and hence could be extended as 
derivation to $(A_\theta)'$. In fact, since they are realized in the 
representation as commutators, they extend as derivations to the image 
of $A_\theta \otimes (A_\theta)'$.

Moreover, the canonical trace $\mathfrak{t}$ is also invariant 
under conjugation by $J$ in the following sense:
$$ \mathfrak{t}(J a J^{-1}) = \overline{\mathfrak{t}(a)}, $$
and therefore it makes sense to consider its extension to the
image of $A_\theta \otimes (A_\theta)'$ by setting:

$$ \mathfrak{t}(a b^o) = a_{0,0} b_{0,0}, $$
where 
$$ a = \sum_{m,n} a_{m,n} U^m V^m, \quad \quad 
   b = \sum_{m,n} b_{m,n} \tilde{U}^m \tilde{V}^n,$$
and
$$ \tilde{U} = J U^* J^{-1}, \tilde{V} = JV^*J^{-1}.$$

Clearly, the functional is well-defined on the image 
of tensor product of smooth algebras $A_\theta$ and 
$A_\theta^o$. In fact, it a trace obtained from the
,,flat'' Laplacian on the noncommutative torus, 
$\Delta = (\delta_1)^2 + (\delta_2)^2$ through the 
formula:
$$ \mathfrak{t}(a b^o) = 
\hbox{Res}_{s=2} \left( \hbox{Tr\ } \pi(a) \pi(b^0) \Delta^{-s/2} \right). $$
It follows directly from Theorem 2.6 in \cite{MCC} that the trace factorizes.

We take as the Dirac-type operator $D$, the operator of the form:
\begin{equation}
D = \sum_{j,\mu=1}^2 \left( \sigma^j e_{j}^\mu \delta_\mu +
\frac{1}{2} \sigma^j \delta_\mu(e_{j}^\mu) \right),
\label{dirtor} 
\end{equation}
 
where $\sigma^i$ are the usual selfadjoint Pauli matrices and $e^i_j$
are selfadjoint elements of $JA_\theta J^*$ such that the matrix $e^j_i$ is invertible. \\

Such operator is of the general type of generalized differential operators
on the noncommutative torus (or, more precisely, on the module over $A_\theta$), 
which were introduced in studied in \cite{Co80,Co94}:
$$ P = \sum_{\alpha,\beta} C_{\alpha,\beta} \delta_1^\alpha \delta_2^\beta,$$
where $C_{\alpha\beta}$ are $A_\theta$ endomorphisms. Clearly this is the
case of $D$ given by (\ref{dirtor}), as the elements of $JA_\theta J^*$ are 
$A_\theta$-endomorphisms of the the trivial module. 

Notice that from the assumption
of invertibility of  the matrix $e^j_i$ it follows that $\sigma^i e^j_i \xi_j$ is invertible as 
an element of $M_2(JA_\theta J^*)$ for arbitrary $\xi\in \R^2 \setminus \{0\}$.\\

Thus, following \cite{Co94}[Definition 1, p.358]  $D$ could be seen as an elliptic operator.\\

The principal symbol of $D^2$ is of the form 
\begin{equation}
\sum_{i,j} \left(  P_{ij} \xi^i \xi^j \right), 
\label{laplace}
\end{equation} 
where $P_{ij}$ are elements of $M_2(JA_\theta J^*)$. 
For the operators of that type, under some additional assumptions, we can show 
that we recover the class of operators with compact resolvent, as one would like
to have for genuine Laplace operators. 
We will not dwell here on this point making the conditions precise, but just sketch the argumentation.
First, consider the operator 
$$T = \left( P_{11} (\delta_1)^2  + P_{12} \delta_1 \delta_2 
   + P_{22} (\delta_2)^2  + \lambda \right) (1 +\triangle)^{-1}, $$
where $\triangle$ is the ,,flat'' Laplace operator on the 
noncommutative torus:
$$ \triangle = (\delta_1)^2  (\delta_2)^2, $$
and $P_{ij}$ are elements from $JA_\theta J^*$ (or the matrix algebra over it).
Note that each of the components of the sum is bounded, hence $P$ is 
bounded. Further, if the operator $P=P_{ij} \delta_i \delta_j$ is assumed
to be elliptic, then using similar arguments as in \cite{CoNo}

one can 
show that for some $\lambda$, $P+\lambda$ has no kernel 
and therefore $T$ is invertible. Hence, one can see that $(P+\lambda)^{-1}$ is compact.\\
We postpone to future work 
the detailed analysis (which would necessarily make use of noncommutative Sobolev spaces, 
see \cite{Ro08} and \cite{SaSt} for first examples).   \\
 
In addition to that property (which certainly holds only for some class of
the operators of the type considered) we can establish few more algebraic
properties. 
The first one regards the first order differential calculi.
\begin{lemma}
For any $D$ the bimodule of one forms $\Omega^1_D(A_\theta)$ is isomorphic to 
$A_\theta\oplus A_\theta$.
\end{lemma}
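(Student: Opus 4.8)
The plan is to compute the commutators $[D,a]$ explicitly, identify the resulting one-forms, and exhibit the bimodule isomorphism directly. First I would observe that, since the coefficients $e_j^\mu$ lie in $JA_\theta J^*$ and the zero-order term $\frac12\sigma^j\delta_\mu(e_j^\mu)$ lies in $M_2(\C)\ox JA_\theta J^*$, both commute with every $a\in A_\theta$; hence only the first-order part survives, and by the Leibniz rule
$$[D,a]=\sum_{j,\mu}\sigma^j e_j^\mu\,\delta_\mu(a).$$
A general one-form is then $\omega=\sum_i a_i[D,b_i]=\sum_{j,\mu}\sigma^j e_j^\mu v_\mu$, where I set $v_\mu=\sum_i a_i\delta_\mu(b_i)\in A_\theta$ and use that each $a_i$ commutes with $\sigma^j e_j^\mu$ to pull it to the right. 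Thus every element of $\Omega^1_D(A_\theta)$ has the form $\sum_j\sigma^j\eta_j$ with $\eta_j=\sum_\mu e_j^\mu v_\mu$.

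Next I would introduce the map $\Phi\colon A_\theta\oplus A_\theta\to B(\CH)$ given by $\Phi(v_1,v_2)=\sum_{j,\mu}\sigma^j e_j^\mu v_\mu$ and verify it is the desired isomorphism. Injectivity follows from the linear independence of $\sigma^1,\sigma^2$ in $M_2(\C)$ (apply $\mathrm{tr}(\sigma^i\,\cdot\,)\ox\id$ to separate the two components), together with invertibility of the matrix $(e_j^\mu)$ over $JA_\theta J^*$: the vanishing of $\Phi(v_1,v_2)$ forces $\sum_\mu e_j^\mu v_\mu=0$ for $j=1,2$, which I solve to get $v_1=v_2=0$. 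That $\Phi$ is a bimodule map is immediate from the same commutation relations, since left and right multiplication by $c\in A_\theta$ act on $\Phi(v_1,v_2)$ simply as $\Phi(cv_1,cv_2)$ and $\Phi(v_1c,v_2c)$.

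The main point — and the step I expect to carry the real content — is that the image of $\Phi$ is exactly $\Omega^1_D(A_\theta)$, equivalently that the realized pairs $\bigl(\sum_i a_i\delta_1(b_i),\,\sum_i a_i\delta_2(b_i)\bigr)$ already exhaust $A_\theta\oplus A_\theta$. Here I would exploit that the generators are unitary and that the derivations are ,,diagonal'' on them: taking $b=U_1$ gives $(\delta_1U_1,\delta_2U_1)=(U_1,0)$, so $a\,[D,U_1]$ corresponds to the pair $(aU_1,0)$, and since $U_1$ is invertible $aU_1$ runs over all of $A_\theta$; likewise $b=U_2$ yields $(0,aU_2)$, covering the second summand. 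Summing such contributions produces an arbitrary pair $(v_1,v_2)\in A_\theta\oplus A_\theta$, so by the discussion above $\Phi$ is onto $\Omega^1_D(A_\theta)$. Combining this with injectivity and the bimodule property gives $\Omega^1_D(A_\theta)\cong A_\theta\oplus A_\theta$.
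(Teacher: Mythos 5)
Your proposal is correct and follows essentially the same route as the paper: your map $\Phi$ is built from exactly the paper's central generators $\omega_\mu = U_\mu^*[D,U_\mu] = \sum_j \sigma^j e_j^\mu$ (your $\Phi(1,0)$ and $\Phi(0,1)$), your surjectivity step via unitarity of the $U_i$ is the paper's claim that these forms generate, and your injectivity argument (linear independence of $\sigma^1,\sigma^2$ plus invertibility of the matrix $(e_j^\mu)$ over $JA_\theta J^*$) is precisely the paper's freeness argument. You merely spell out details the paper leaves implicit.
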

\begin{proof}
Oberve that the forms:
$$ \omega_i = (U_i)^* [D, U_i], $$
are central (that is $[a, \omega_i]=0$ for every $a \in A_\theta$) and
generate $\Omega^1(A_\theta)$ as a left (or right) module.  

We only need to check that that module is free. Assume that there exist $a_i \in A_\theta$
such that $ \sum_i a_i \omega_i = 0 $. This implies $ \sum_i e_j^i a_i = 0 $ for $j=1,2$. 
However, since we assumed that matrix $e_j^i$ was invertible, we immediately have $a_i=0$.
\end{proof}
The second one is the orientation property (we refer to \cite{Co95} for the terminology).
\begin{lemma}
Let $J A_\theta J^*\otimes A_\theta $ be a $A_\theta $-bimodule with
$a(JbJ^*\otimes c)d=JbJ^*\otimes acd$. For each $D$, there exists 
$c \in J A_\theta J^*\otimes A_\theta \otimes A_\theta \otimes A_\theta $,
\begin{equation}\label{orient}
c = \frac{1}{2i} \sum_{a,b,j,k} 
\epsilon_{ba} E^b_k E^a_j \otimes U_k^* U_j^* \otimes U_j \otimes U_k,
\end{equation}
where
$\epsilon_{ba} $ is the antisymmetric tensor and $E^b_k$ is 
the inverse of the matrix $e^k_j$, i.e. $E^b_k e^k_j = \delta^b_j$, 
such that 
$$\pi_D(c):= 
\frac{1}{2i}\sum_{a,b,j,k,p,r} \epsilon_{ba} E^b_k E^a_jU_k^* U_j^* [D,U_j][D,U_k] = \chi \ ,$$
where $\chi$ is  the grading operator on $\CH$.
If we view $c$ as a Hochschild 2-chain  with values in a  $A_\theta $-bimodule 
$J A_\theta J^*\otimes A_\theta $,
where $a(JbJ^*\otimes c)d=JbJ^*\otimes bcd$,
$c$ is a  cycle if 
\begin{equation}\label{zwei}
[E^1_1,E^2_2]=0 \quad\hbox{and}\quad  [E_1^2, E_2^1]=0.
\end{equation}
\end{lemma}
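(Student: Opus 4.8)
The plan is to verify the two assertions of the lemma in turn. First I would establish that $\pi_D(c) = \chi$ by direct computation. Recall that $[D,U_j] = \sum_\mu \sigma^\mu e_\mu^j \, U_j$ modulo the zero-order correction terms, and since the principal-symbol part is what multiplies $\delta_\mu(U_j) = \delta_j^\mu U_j$, we have on the generators $[D,U_j] = \sigma^j e_j^i U_j \cdot(\text{appropriate index bookkeeping})$. The key observation is that each $U_k^* U_j^* [D,U_j][D,U_k]$ collapses because the unitaries cancel against their adjoints and the $JA_\theta J^*$-valued coefficients $e_j^i$ commute with the $U$'s (they live in the commutant). After these cancellations the sum reduces to $\tfrac{1}{2i}\sum \epsilon_{ba} E^b_k E^a_j\, (\sigma^? e^?)(\sigma^? e^?)$, and the inverse-matrix identity $E^b_k e^k_j = \delta^b_j$ contracts the frame factors $e$ against their inverses $E$. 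What survives is the purely Clifford-algebraic expression $\tfrac{1}{2i}\sum_{a,b}\epsilon_{ba}\sigma^b\sigma^a = \tfrac{1}{2i}(\sigma^2\sigma^1 - \sigma^1\sigma^2)$, which by the Clifford relation (\ref{clifrel}) equals $\tfrac{1}{2i}(-i\sigma^3 - i\sigma^3) = -\sigma^3$; up to the sign convention fixed by the choice $\chi = \sigma^3$ this yields $\chi$.

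Second, I would verify that $c$ is a Hochschild cycle under the condition (\ref{zwei}). Here the bimodule structure is the one specified, $a(JbJ^*\otimes e)d = JbJ^*\otimes bed$ (note the twisted right action), so the Hochschild boundary operator $b$ acts only on the $A_\theta\otimes A_\theta$ tensor legs while treating the $JA_\theta J^*$ coefficient as a passive label. Writing $c = \tfrac{1}{2i}\sum \epsilon_{ba} E^b_k E^a_j \otimes U_k^* U_j^* \otimes U_j \otimes U_k$ as a $2$-chain with coefficient in the leading tensor slot, I would apply the standard formula $b(m\otimes x\otimes y) = mx\otimes y - m\otimes xy + (\text{twisted})\, ym$. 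The first and last terms of the boundary produce expressions in which the coefficient $E^b_k E^a_j$ gets multiplied (on the appropriate side, via the twisted action) by products of the form $U_k^* U_j^* U_j U_k$ or $U_j U_k U_k^* U_j^*$, which reduce to factors that are symmetric in the exchange of $(j,k)$ or $(a,b)$. Since $\epsilon_{ba}$ is antisymmetric, any term whose remaining coefficient is symmetric under the relevant index swap is annihilated upon summation; the surviving piece is precisely a commutator of the inverse-frame components.

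The main obstacle, and where I would concentrate the careful bookkeeping, is the middle term $-m\otimes xy$ of the boundary together with the antisymmetrization. After contracting through the $\epsilon_{ba}$ tensor, the boundary $bc$ should organize into an expression proportional to the antisymmetrized commutators $[E^1_1, E^2_2]$ and $[E^2_1, E^1_2]$ of the inverse-frame entries --- this is exactly why condition (\ref{zwei}) is the natural closure requirement. The delicate points are getting the noncommutative orderings right (the $U_i$ satisfy $U_1 U_2 = e^{2\pi i\theta}U_2 U_1$, so reshuffling the unitary legs introduces phase factors $e^{2\pi i\theta}$ that must be tracked and shown to cancel against the phases arising from the opposite ordering), and correctly implementing the twisted right action in the last boundary term. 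Once the phases are verified to cancel and the surviving coefficient is identified as the pair of commutators in (\ref{zwei}), the vanishing $[E^1_1,E^2_2]=0$ and $[E^2_1,E^1_2]=0$ forces $bc = 0$, establishing that $c$ is a Hochschild cycle.
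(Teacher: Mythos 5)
The first half of your proposal is correct and is exactly the paper's own (equally short) argument: the zero-order part of $D$ lies in $M_2(JA_\theta J^*)$ and therefore commutes with $U_j$, so $[D,U_j]=\sum_p\sigma^p e^j_p\,U_j$ holds exactly (not just modulo corrections); the unitaries cancel because the frame coefficients sit in the commutant, the contractions $E^a_j e^j_p=\delta^a_p$ and $E^b_k e^k_r=\delta^b_r$ remove the frame factors, and the Clifford relation (\ref{clifrel}) leaves $\pm\sigma^3$, the sign being fixed by the (unstated) normalization of $\epsilon_{ba}$. That part matches the paper.

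The second half has a genuine gap: you describe what you would do and announce what the answer should be, but the computation that \emph{is} the content of the claim is never carried out, and the structural assertions in your sketch are incorrect. For the $2$-chain $m\otimes U_j\otimes U_k$ with coefficient $m=E^b_kE^a_j\otimes U_k^*U_j^*$ in the bimodule $JA_\theta J^*\otimes A_\theta$ (the $JA_\theta J^*$ leg a spectator), the boundary is $mU_j\otimes U_k-m\otimes U_jU_k+U_k m\otimes U_j$. Every face retains one free $A_\theta$ leg, so the fully contracted products $U_k^*U_j^*U_jU_k$ and $U_jU_kU_k^*U_j^*$ that you invoke never occur; only the single cancellations $U_k^*U_j^*U_j=U_k^*$ and $U_kU_k^*U_j^*=U_j^*$ do. Consequently the outer faces are \emph{not} annihilated by the antisymmetry of $\epsilon_{ba}$: after relabelling dummy indices they combine into $\sum_{a,b,k}\epsilon_{ba}\,[E^b_k,\,E^a_1+E^a_2]\otimes U_k^*\otimes U_k$, a nontrivial commutator condition that must be imposed rather than a term that vanishes for free.

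Moreover, the middle face splits into diagonal pieces ($j=k$), whose coefficients are $[E^1_j,E^2_j]$ in front of the legs $(U_j^*)^2\otimes U_j^2$, and an off-diagonal piece in front of $U_2^*U_1^*\otimes U_1U_2$ whose coefficient is the single sum $[E^1_1,E^2_2]+[E^1_2,E^2_1]$ (this is where the phases $e^{\pm 2\pi i\theta}$ cancel, as you correctly anticipate). Since these monomial legs are linearly independent in $A_\theta\otimes A_\theta$, the boundary vanishes only if each coefficient vanishes separately. What falls out of the computation is therefore the system $[E^1_1,E^2_1]=0$, $[E^1_2,E^2_2]=0$, $[E^1_1,E^2_2]+[E^1_2,E^2_1]=0$ (the outer-face conditions then follow), which is \emph{not} verbatim the pair of commutators in (\ref{zwei}). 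Reconciling this system with the condition stated in the lemma is precisely the nonobvious bookkeeping that a proof must supply; your proposal asserts the outcome instead of performing it, and its guiding heuristics (symmetric factors killed by $\epsilon_{ba}$, full unitary contractions) would steer the calculation wrong.
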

\begin{proof}
A straightforward calculation shows that
$$
\pi_D(c) = 
        \frac{1}{2i}\sum_{a,b,j,k,p,r} \epsilon_{ba} E^b_k E^a_j e_p^j e_r^k \sigma^p \sigma^r \\
       = \frac{1}{2i} \sum_{p,r} \epsilon_{pr} \sigma^p \sigma^r = \sigma^3 =\chi.
$$
The computation that the Hochschild boundary of $c$  vanishes is straightforward and 
leads to the conditions (\ref{zwei}) of the lemma. 
\end{proof}

This property is especially interesting since it is not known for the so called {\em gauge} perturbations of the Dirac operator. It is worth to mention that, to the best of our knowledge, it 
is the first instance (apart from the case of $0$-dimensional spectral triples over finite algebras) where the coefficients of the  Hochschild cycle have to be taken in a larger 
$A_\theta $-bimodule $J A_\theta J^*\otimes A_\theta $ rather than in $ A_\theta $ itself,
which usually suffices.

Note that the terms valued in $J A_\theta J^*$ are "spectators" from the point of view
of the Hochschild boundary operator and although we obtain a nontrivial condition for
the commutation relations between the elements of the coframe $E_i^j$ still it does not 
restrict it to a completely commutative case. On the other hand, one might think of relaxing the Hochschild boundary condition by requiring, 
for instance, that $\Phi(bc)=0$, where $\Phi = \phi \otimes \id \otimes \id$ and
$\phi$ is an arbitrary character on $J A_\theta J^*$. 

It is also interesting that the conditions (\ref{zwei}) for $c$ to be a cycle would automatically
enforce that $\det E$ and hence $\sqrt{g}$ can be unambigously defined, 
so it is then possible to detach $\sqrt{g}$ from the product $\sqrt{|g|}\, R$,
and define scalar curvature alone. We leave that issue for further investigations. 

Finally, let us comment the link with other operators. We note that the Dirac operator, which we propose is a natural generalization of the operator 
\begin{equation}
\sigma^1 \delta_1 + J \rho J^* \sigma^2 \delta_2, 
\label{dirtorr}
\end{equation}
where $\rho$ is a positive element of $A_\theta$. The class of such 
operators arose from construction of Dirac operators compatible with 
connections on noncommutative principal $U(1)$ bundles \cite{DS,DSZ}.
We also note that with $e_j^k$ such that:
$$ e^1_1 = 1, \;\; e^1_2  = 0, \;\; e^2_2 = \Im(\tau), \;\; e^2_1 = \Re(\tau), $$
we recover $D$ to be just the Dirac operator of the flat metric in the conformal 
class $\tau$. 

Moreover, the family \eqref{dirtor} include the operators 
given by eq. (44) in the paper \cite{conmos2} (which however studies 
the  modular, or twisted, spectral triples given by eq (45) therein).\\
Note that in all those three instances the condition (\ref{zwei}) is satisfied.

\section{The Gauss-Bonnet theorem and curvature on the noncommutative torus}

\subsection{Symbol calculus on Noncommutative Torus}

The symbol calculus defined in \cite{CoTr} and developed further
in \cite{conmos2} is easily generalized to the case of the operators
defined above.

Let us define a differential operator of order at most $n$ as:

$$ P_n = \sum_{i=1,2} \sum_{0 \leq j \leq n} \sum_{0 \leq k \leq j}
a_{ij} \delta_1^{k} \delta_2^{j-k}, $$
where $a_{ij}$ are in 
$J A_\theta J^{-1}$.

Let $\rho$ be $C^\infty$ function from $\R^2$ to $J\CA_\theta J^*$,
which is homogeneous of order $n$, satisfying certain bounds \cite{CoTr}.
For every symbol $\rho$ we define the operator on $\CH_\theta$:
$$ P_\rho(a) = \frac{1}{(2\pi)^n} \int e^{-i s \xi} \rho(\xi) \alpha_s(a) ds dx.$$
where 

$$ \alpha_s(U^\alpha) = e^{i s \cdot \alpha} U^\alpha. $$

The Dirac operator (\ref{dirtor}) could certainly be expressed in this
way and its symbol reads:

\begin{equation}
P(D) = \sum_{j,\mu=1}^2 \left( \sigma^j e_{j}^\mu \xi_\mu 
     + \frac{1}{2} \sigma^j \delta_\mu(e_{j}^\mu) \right) .
\label{dirsym}
\end{equation}
\subsection{The computations}

As shown in \cite{CoTr} by pseudodifferential calculations the value 
$\zeta(0)$ at the origin of the zeta function of the operator $D^2$ is 
given by 
$$ 
\zeta(0)= - \int \mathfrak{t} (b_2(\xi)) \, d \xi,
$$
where $b_2(\xi)$ is a symbol of order $-4$ of the pseudodifferential 
operator $(D^2+1)^{-1}$. It can be computed by recursion from the 
symbol $a_2(\xi) + a_1(\xi) + a_0(\xi)$ of $D^2$ as follows:
\begin{eqnarray}
b_2&=& -(b_0 a_0 b_0 + b_1 a_1 b_0 + \partial_1(b_0) \delta_1(a_1) b_0 +
\partial_2(b_0)\delta_2(a_1)b_0 \nonumber \\
&& + \partial_1(b_1)\delta_1(a_2)b_0 + \partial_2(b_1) \delta_2(a_2)b_0 + \oh\partial_{11}(b_0)\delta_1^2(a_2)b_0  \nonumber \\
&& \oh\partial_{22}(b_0)\delta_2^2(a_2)b_0  + \partial_{12}(b_0)\delta_{12}(a_2)b_0), 
\nonumber
\end{eqnarray}
where 
\begin{eqnarray}
b_1&=& -(b_0 a_1 b_0  + \partial_1(b_0) \delta_1(a_2) b_0 +
\partial_2(b_0)\delta_2(a_2) b_0), \nonumber
\end{eqnarray}
$$ b_0=(a_2+1)^{-1}$$
and 
$$
\partial_1  =\frac{\partial}{\partial \xi_1},
             \quad \quad
             \partial_2=\frac{\partial}{\partial \xi_2}.
$$
In or case we have
$$  
a_2(\xi)= \sigma^j \sigma^k  e_j^\mu e_k^\nu \xi_\mu \xi_\nu \ ,
$$
$$
a_1(\xi)=\sigma^j \sigma^k 
\{ \oh e_j^\mu \delta_\nu( e_k^\nu)  + e_j^\nu \delta_\nu( e_k^\mu) +
\oh \delta_\nu( e_j^\nu) e_k^\mu \} \xi_\mu\ ,
$$
and
$$
a_0(\xi)= \sigma^j \sigma^k 
\{ \ohh \delta_\mu( e_j^\mu)\delta_\nu( e_k^\nu) + 
\oh e_j^\mu \delta_\nu( e_k^\nu) \}\ .
$$

Due to the noncommutativity the computation of several hundreds of terms occuring 
in the formula for $\zeta(0)$ is a formidable task, which needs a symbolic calculation 
assistence and ingeneous manipulations. Even so, it is hardly possible to obtain
a final result without any simplifying assumption. That was indeed the case of 
conformal-type metric in \cite{CoTr}. 

Instead, we propose to perform perturbative analysis of the resulting terms having
assumed that the metric is a slight deviation of the standard equivariant one.

We assume that the components of two-frame can be expanded as 
$$
e_j^\mu = \delta_j^\mu + \varepsilon h_j^\mu ,
$$

where $\delta_j^\mu$ is the Kronecker delta and $ h_j^\mu \in JA_\theta J^*$. The initial 
unperturbed metric is thus flat, with $\tau=i$. This can be assumed without any loss
of generality, as we shall explain a posteriori that our result is valid for arbitrary 
initial constant metric and thus for arbitrary conformal class $\tau$. 

For the assumed form of the Dirac-type operator we shall compute the volume 
form as well as the curvature functional, which in the case of taking its value
on the unit of the algebra shall give us the (perturbative expansion) of the
Gauss-Bonnet theorem.

\subsection{Volume functional and absolute continuity}
 
Let us consider the following functional on the algebra 
$JA_\theta J^*$:
$$ JA_\theta J^* \ni a \mapsto \hbox{Res}_{s=2} \zeta_a(s), $$
where
$$ \zeta_a(s) = \hbox{Tr}\ a |D|^{-s}. $$
Using the symbolic calculus we can relate the residue \cite{fatkha2,fatwon} to the following expression:
$$ \hbox{Res}_{s=2} \zeta_a(s) = \int_{S^1} \mathfrak{t} (a b_0(\xi)) d\Omega(\xi). $$

In the perturbative expansion one then obtain the relevant
volume  functionals (as for $a=1$ one obtains the volume 
as the leaing term of the spectral action expansion):

\begin{equation}
\begin{aligned}
\hbox{Res}_{s=2} & \zeta_a(s) \sim  \\
&(2\pi)^2 \mathfrak{t} \left( a \left( 1 - \varepsilon(h_1^1 + h_2^2) 
+ \varepsilon^2 \left( (h_1^1)^2 + (h_2^2)^2 + \frac{1}{2} \left[ h_1^2, h_2^1 \right]_+ 
 + \frac{1}{2} \left[ h_1^1, h_2^2\right]_+ \right) \right) \right) + O(\varepsilon^3),
\label{vf1}
\end{aligned}
\end{equation}

Note that the functional $\hbox{Res}_{s=2} \zeta_b(s)$ taken for $b \in A_\theta$ 
factorizes, i.e.:

\begin{equation}
\begin{aligned}
 \hbox{Res}_{s=2} & \zeta_b(s) \sim \\
 & (2\pi)^2 \mathfrak{t}(b)  
\mathfrak{t}\left( \left( 1 - \varepsilon(h_1^1 + h_2^2) 
+ \varepsilon^2 \left( (h_1^1)^2 + (h_2^2)^2 + \frac{1}{2} \left[ h_1^2, h_2^1\right]_+ 
 + \frac{1}{2} \left[ h_1^1, h_2^2\right]_+ \right) \right) \right) + O(\varepsilon^3),
 \label{vf2}
\end{aligned}
\end{equation}

For this reason one might wonder whether the absolute continuity condition (see axiom 5 in \cite{Co95}) for the spectral triple makes sense in an unchanged form.

In particular it might be asked whether the
hermitian structure on the trivial module of spinors (which after 
completion gives the spinor space) should be valued in the algebra
$A_\theta$ or rather in the opposite algebra $J A_\theta J*$. 

Although still the formula provides a reasonable answer, one sees 
that the notion of volume form and volume measure (as computed
perturbatively in (\ref{vf1}) and (\ref{vf2})) becomes slightly different than in the classical case or in the case of flat Dirac operator. \\

\subsection{Scalar curvature} \label{curv}

The idea of the definition of scalar curvature appeared first in \cite{conmar}, expressed 
in terms of the second term of the heat expansion. For our purposes, as the symbols of 
the Dirac operator and its square are not in the algebra but in its commutant, we need 
to propose a slightly modified definition. 

We have also to take into account,
that we consquently use the state $ \mathfrak{t}$ corresponding to the 'flat' measure
 over $\T_\theta$.
Thus we search for the analogy not of the classical scalar curvature,
but rather of its product with $\sqrt{|g|}$. 
More precisely, we search for  the unique element in $\tilde R\in JA_\theta J^*$ such that:
$$  \textnormal{Trace}\, (a (D^2)^{-s/2})_{|_{s=0}}  
  = \frac{1}{12\pi} \mathfrak{t} (a \tilde R), \qquad \forall a \in JA_\theta J^* .
$$

The refinement of this notion (which classically vanishes)
is the unique  element $\tilde  R^\gamma \in JA_\theta J^*$ that satisfies:
$$  \textnormal{Trace}\, (\gamma a (D^2)^{-s/2})_{|_{s=0}} 
  =  \frac{1}{12\pi} \mathfrak{t}(a \tilde R^\gamma), \qquad \forall a \in JA_\theta J^*.
$$

Observe that these notions certainly make perfect sense, however, one 
might ask why we do not take into account similar functionals with $a \in A_\theta$ 
instead. The reason is the fact that the trace factorizes on products of elements from 
$A_\theta$ by the elements from the  $JA_\theta J^*$, i.e. 
for $a \in A_\theta$ and $b \in JA_\theta J^*$:
$$  \mathfrak{t}(a b) =  \mathfrak{t}(a)  \mathfrak{t}(b). $$ 

Using the calculus of symbols extended to $A_\theta$ and $JA_\theta J^*$ one can 
find that that the curvature functional would then vanish for all $a$ 
if it vanishes for $a=1$ (which is the Gauss-Bonnet theorem). We conjecture that 
this is the case for all admissible Dirac operators on the noncommutative 2-torus.
Of course, such situation is very particular for the case considered and might not 
be generic.

Below we present the results for the curvature and chiral curvature, seen as
elements of $JA_\theta J^*$ in this sense. In fact, even in this simple case the 
computations are much involved, as the expression for $b_2(\xi)$ alone counts 7100 
terms (up to $\varepsilon^2$) and its printout takes more than 100 pages. For this 
reason we present here only the final result, after integration with respect to $\xi$.\\

$$
\begin{aligned}
\tilde R =& 
2 \varepsilon \left( 
      + \delta_{1} \delta_{1} (h_2^2)  
      + \delta_{2} \delta_{2} (h_1^1)
      - \delta_{1} \delta_{2} (h_1^2)  
      - \delta_{2} \delta_{1} (h_2^1) \right) \\
  +& 
\varepsilon^2 \left(
     \left[ h_1^1, \delta_1\delta_2(h_2^1) +(\delta_1)^2(h_2^2) -2  (\delta_2)^2(h_1^1) \right]_+ 
   + \left[ h_2^2, \delta_1\delta_2(h_1^2) +(\delta_2)^2(h_1^1) - 2  (\delta_1)^2(h_2^2) \right]_+ \right. \\
  & \phantom{\frac{1}{12} \pi \varepsilon^2 }
     +\left[ h_1^2, 2 \delta_1\delta_2(h_2^2) + \delta_1\delta_2(h_1^1)
                   -(\delta_2)^2(h_1^2) - (\delta_1)^2(h_2^1) - (\delta_2)^2(h_1^2)  \right]_+ \\
  & \phantom{\frac{1}{12} \pi \varepsilon^2 }
     +\left[ h_2^1, 2\delta_1\delta_2(h_1^1) + \delta_1\delta_2(h_2^2)
                  -(\delta_2)^2(h_1^2)  - (\delta_1)^2(h_1^2) - (\delta_1)^2(h_2^1) \right]_+ \\
  & \phantom{\frac{1}{12} \pi \varepsilon^2 }
    +\left[ \delta_2(h_1^1) , 2\delta_1(h_2^1) + \delta_1(h_1^2) \right]_+ 
    + \left[ \delta_1(h_2^2) , 2\delta_1(h_1^2) + \delta_2(h_1^2) \right]_+ \\
  & \phantom{\frac{1}{12} \pi \varepsilon^2 }
    +\left[ \delta_1(h_1^1) , \delta_1(h_2^2) + \delta_2(h_2^1) \right]_+ 
    + \left[ \delta_2(h_2^2) , \delta_2(h_1^1) + \delta_1(h_1^2) \right]_+ \\
  & \phantom{\frac{1}{12} \pi \varepsilon^2 }
    -2 \left[ \delta_2(h_1^2) , \delta_2(h_2^1) + \delta_1(h_2^1) \right]_+ \\
    & \phantom{\frac{1}{12} \pi \varepsilon^2 } \left.
    -2 \left( \delta_2(h_1^2) \right)^2  - 2 \left( \delta_1(h_2^1) \right)^2  
    -4 \left( \delta_2(h_1^1) \right)^2  -4 \left( \delta_1(h_2^2) \right)^2 \right)  + O(\varepsilon^3)
\end{aligned}
$$
where $[\cdot,\cdot]_+$ denotes anticommutator.\\

It is easy to see that the linear terms coincide with the ones from the
commutative case, whereas in the next order we obtain terms, which are not 
a straigthforward generalization of the commutative ones.

More interesting is the ,,chiral curvature'' $\tilde R^\gamma$. In our case, the
first order term vanishes (as expected), whereas the second term could
be expressed as a sum of commutators:
$$
\begin{aligned}
\tilde R^\gamma = & 
i \varepsilon^2 \left( 
\left[ h_1^1, \delta_2\delta_2(h_2^1 + h_1^2) 
-  \delta_1\delta_2(3 h_1^1 + 2 h_2^2) 
+ 3 \delta_1\delta_1(h_2^1) \right] \right. \\
& \phantom{xxxx} 
+ \left[ h_1^2, 3 \delta_1\delta_1(h_2^2) 
- 2 \delta_2\delta_2 (h_1^1) - 2 \delta_1\delta_2(h_1^2) 
+ \delta_1\delta_2 (h_2^1)  \right] \\
& \phantom{xxxx} 
+ \left[ h_2^1, 
2 \delta_1\delta_1(h_2^2) + 2 \delta_1\delta_2(h_2^1) 
-3\delta_2\delta_2(h_1^1) -   \delta_1\delta_2(h_1^2) \right] \\
& \phantom{xxxx} 
+\left[ h_2^2, \delta_1\delta_2(3 h_2^2 + 2 h_1^1) 
-  \delta_1\delta_1(h_1^2 + h_2^1) - 3 \delta_2\delta_2(h_1^2) \right] \\
& \phantom{xxxx} 
 + 3\left[ \delta_1(h_1^1), \delta_1(h_2^1) \right]
  + \left[ \delta_2(h_1^2), \delta_1(h_2^1) \right]
 -2 \left[ \delta_2(h_1^1), \delta_1(h_2^2) \right] 
 -3 \left[ \delta_2(h_1^2), \delta_2(h_1^1) \right] \\
& \phantom{xxxx} 
 +3\left[ \delta_2(h_1^2), \delta_2(h_2^2) \right]
 -3\left[ \delta_2(h_2^1), \delta_2(h_1^1) \right]
 - \left[ \delta_1(h_1^1), \delta_2(h_2^2) \right] 
 -2\left[ \delta_2(h_2^1), \delta_1(h_1^2) \right] \\
& \phantom{xxxx} \left.
  -3\left[ \delta_1(h_2^2), \delta_1(h_1^2) \right]
  +3\left[ \delta_1(h_2^1), \delta_1(h_2^2) \right]
 \right) + O(\varepsilon^3),
\end{aligned}
$$
so it vanishes in the limiting case of commutative torus.

\subsection{The perturbative Gauss-Bonnet theorem}

The statement of the Gauss-Bonnet theorem couldbe phrased either as vanishing
of the curvature functional at $a=1$ or, equivalently as:
$$ \mathfrak{t} (\tilde R) = 0. $$

Using the periodicity of the trace and the fact that trace is invariant with
respect to $U(1)$ actions (which translates to $ \mathfrak{t}(\delta_i(a))=0$ for
$i=1,2$ and any $a \in JA_\theta J^*$, we obtain that at order $\varepsilon$ the
statement holds trivially, all terms are derivations of some elements from $JA_\theta J^*$.

At the order $\varepsilon^2$ (we dentote the $\varepsilon^2$ part
of $\tilde R$ by $\tilde R_{\varepsilon^2}$) we need first to use the cyclicity 
of the trace, and then the Lebniz rule, to reduce (after three consecutive iterations) the expression to the following one:
$$
\begin{aligned}
\mathfrak{t}(\tilde R)_{\varepsilon^2} = 
 \mathfrak{t} \left( \right. &
  - 4 h_1^1 \left( \delta_1 \delta_2(h^1_2) - \delta_2 \delta_1 (h^1_2) \right)
   +h_1^1 \left( \delta_1 \delta_2(h^2_1) - \delta_2 \delta_1 (h^2_1) \right)  \\
& \left. -3 h_1^2 \left( \delta_1 \delta_2(h^2_2) - \delta_2 \delta_1 (h^2_2) \right)
  +  h_2^1 \left( \delta_2 \delta_1(h^2_2) - \delta_1 \delta_2 (h^2_2) \right) \right) \\  
=& \; 0,
\end{aligned}
$$
(the last equality follows since $\delta_1$ and $\delta_2$ commute).

We stress in the above computations we used the cyclicity and invariance 
of $\mathfrak{t}$ the properties of the derivations $\delta_j$ (like 
integration by parts rule), but never their explicit form. Therefore, the result 
we obtained holds for any linear combination (with constant coefficients) 
of $\delta_j$, in particular for 
$$
\delta_1= \delta_1,\quad \delta_2' = (\tau - \overline{\tau})/2i \delta_2,
$$
which corresponds to the complex (or equivalently conformal) class 
on $\T_\theta$ labeled by  $\tau$.

\section{Conclusions}
Although this project has still to be further developed, we have already 
encountered several new and interesting phenomena. First of all, we observe 
that the existence of {\em real} spectral triples is necessary to provide a kind of 
background geometry. Then one can consistently introduce a class of new 
Dirac operators, which in turn correspond to new {\em noncommutative metrics}, 
extending the so-far considered examples. 

Our aim in this paper was to outline the possibilities, which appear to be much
broader than originally believed. Still, many of the properties of the introduced
objects are to be studied more closely, which we shall adress in future work. 
We believe that the introduction of families of (curved) metrics 
might open a new interstin directions in the investigations of noncommutative
manifolds as metric spaces.


\end{document}